\numberwithin{equation}{section}
\newcommand{\CP}{\mathbb{C}\mathrm{P}}
\newcommand{\C}{\mathbb{C}}
\newtheorem{thm}{Theorem}[section]
\newtheorem{cor}[thm]{Corollary}
\newtheorem{prop}[thm]{Proposition}
\theoremstyle{remark} 
\newtheorem{remark}{Remark}[section]
\newcommand{\be}{\begin{equation}}
\newcommand{\ee}{\end{equation}}
   \edef\Gin@extensions{\Gin@extensions,.mps}
\title[Symplectic instability of B\'ezout's theorem]{Symplectic instability of B\'ezout's theorem}
\author{Michele Ancona and Antonio Lerario}
\begin{document}

\maketitle
\begin{abstract}We investigate the failure of B\'ezout's Theorem for two symplectic surfaces in $\CP^2$ (and more generally on an algebraic surface), by proving that every plane algebraic curve $C$ can be perturbed in the $\mathscr{C}^\infty$--topology to an arbitrarily close smooth symplectic surface $C_\epsilon$ with the property that the cardinality $\#C_\epsilon \cap Z_d$ of the transversal intersection of $C_\epsilon$ with an algebraic plane curve $Z_d$ of degree $d$, as a function of $d$ can grow arbitrarily fast. As a consequence we obtain that, although B\'ezout's Theorem is true for pseudoholomorphic curves with respect to the same almost complex structure, it is ``arbitrarly false" for pseudoholomorphic curves with respect to different (but arbitrarily close) almost--complex structures (we call this phenomenon ``instability of B\'ezout's Theorem").\end{abstract}

\section{Introduction}
B\'ezout's Theorem says that if two algebraic curves $C_1$ and $C_2$ in $\CP^2$ intersect transversally, then they meet exactly in $\deg(C_1)\cdot \deg(C_2)$ points. Pseudoholomorphic curves also share this property. More precisely, let $J$ be an almost--complex structure on $\CP^2$ and $C_1$ and $C_2$ be two $J$--holomorphic curves of degree $d_1$ and $d_2$ (meaning that the homology classes $[C_1]$ and $[C_2]$ equals, respectively $d_1[\CP^1]$ and $d_2[\CP^1]$). Then, if $C_1$ and $C_2$ intersect transversally, the number of points in $C_1\cap C_2$ is exactly equal to $d_1\cdot d_2$. 

Pesudoholomorphic curves are a fundamental tool in symplectic geometry, since the work of Gromov \cite{gromovpseudo}. When the almost--complex structure $J$ is tamed by a symplectic form $\omega$ (meaning $\omega(v,Jv)>0$, for any tangent vector $v\neq 0$), then a $J$--holomorphic curve is a symplectic surface. Conversely, given a symplectic surface $C\subset \CP^2$, there exists a tamed almost--complex structure $J$ for which $C$ is a $J$--holomorphic curve \cite{mc}. It is then natural to investigate whether some form of B\'ezout's Theorem is still valid for symplectic surfaces. 

Similar homological bounds are not valid in general for the intersection of two symplectic surfaces in $\CP^2$ without the assumption that both surfaces are pseudoholomorphic for the same almost--complex structure $J$. In this paper we will prove, in a strong sense, that one cannot expect \emph{any} type of bound at all (this will be made clear below). We will say that in the symplectic framework B\'ezout's Theorem is  ``arbitrarly false" (see Remark \ref{rmk:symplectic}). This will be a consequence of the following theorem.
% which says that B\'ezout's theorem is  ``arbitrarly false" for small $\mathscr{C}^{1}$--perturbation of any plane algebraic curve. 

\begin{thm}\label{thm:CP1}Let $C\hookrightarrow \CP^2$ be a smooth algebraic curve and $\{a_d\}_{d\in \mathbb{N}}$ be a sequence of positive natural numbers. For every $\epsilon>0$ there exists a $\mathscr{C}^{\infty}$--surface $C_\epsilon\hookrightarrow \CP^2$, $\epsilon$--close to $C$ in the $\mathscr{C}^\infty$--topology, and a sequence $\{Z_d\}_{d\in \mathbb{N}}$ of smooth algebraic curves, with $d=\deg(Z_d)$, such that for infinitely many $d\in \mathbb{N}$:
\be\label{eq:in} \#Z_d\cap C_\epsilon \geq a_d.\ee
Moreover the sequence $\{Z_d\}_d\in \mathbb{N}$ can be chosen such that the intersection $Z_d\cap C_\epsilon$ is transversal for every $d\in \mathbb{N}.$
\end{thm}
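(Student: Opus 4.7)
The plan is to exploit the contrast between the infinite-dimensional freedom of smooth perturbations of $C$ and the finite-dimensional (of dimension $\binom{d+2}{2}-1$) linear system of plane algebraic curves of degree $d$. Concretely, I will construct $C_\epsilon$ as a smooth surface that passes through countably many preassigned points, organized so that for each $d$ a cluster of $a_d$ of them lies on a single algebraic curve $Z_d$ of degree $d$.

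\emph{Local setup.} Pick a smooth point $p\in C$ and an affine chart on $\CP^2$ with coordinates $(z,w)$ in which $C$ is locally $\{w=0\}$ and $p = 0$. Choose a sequence of distinct points $p_d \in C$ converging to $p$ together with pairwise disjoint small disks $U_d \subset C$ of radii $r_d \to 0$ centered at the $p_d$. Outside $\bigsqcup_d U_d$ I set $C_\epsilon = C$; all perturbations will be concentrated in $\bigsqcup_d U_d$.

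\emph{Curves and interpolation points.} For each $d$, let $Z_d \subset \CP^2$ be a smooth algebraic curve of degree $d$ passing through $p_d$, transverse to $C$ at $p_d$ but with a very small angle $\theta_d>0$ between the two tangent lines. Then $Z_d$ stays at distance $\lesssim r_d\theta_d$ from $C$ throughout $U_d$, so one may pick $a_d$ distinct points $q_{d,1},\dots,q_{d,a_d}$ on $Z_d \cap U_d$ whose $z$-projections are spread over $U_d$ (spaced by $\sim r_d/a_d$) and whose normal displacements from $C$ are of order $r_d\theta_d$. A Whitney-type interpolation on each $U_d$ produces a smooth function $F_d$ compactly supported in $U_d$ whose graph passes through every $q_{d,i}$; the sum $F = \sum_d F_d$ (disjoint supports) defines the perturbation, and $C_\epsilon$ is locally the graph $\{w = F(z)\}$, glued to the unchanged part of $C$.

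\emph{Estimates and $\mathscr{C}^\infty$-closeness.} Since the supports are disjoint, $\|F\|_{\mathscr{C}^k} = \sup_d \|F_d\|_{\mathscr{C}^k}$, and standard interpolation estimates yield $\|F_d\|_{\mathscr{C}^k} \lesssim \theta_d\, a_d^k\, r_d^{1-k}$. Since $\theta_d>0$ is a free parameter (only required to be nonzero for transversality), one can shrink $\theta_d$ sufficiently rapidly in $d$ so that $\|F\|_{\mathscr{C}^k}$ is smaller than any prescribed bound in each $\mathscr{C}^k$-norm; hence $C_\epsilon$ lies in any given $\mathscr{C}^\infty$-neighborhood of $C$, and in particular is symplectic (tameness is a $\mathscr{C}^1$-open condition).

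\emph{Conclusion and main obstacle.} By construction $Z_d$ contains the $a_d$ points $q_{d,i}\in C_\epsilon$, so $\#(Z_d \cap C_\epsilon) \geq a_d$ for every $d$ --- a fortiori for infinitely many $d$, as required. A final small algebraic deformation of $Z_d$ within the linear system of degree-$d$ curves through $\{q_{d,i}\}$ makes each intersection transversal, by a Bertini/Sard-type genericity argument. The principal technical obstacle is the simultaneous quantitative control of $F$ in all $\mathscr{C}^k$-norms: the bound $\theta_d\, a_d^k\, r_d^{1-k}$ grows quickly with $k$, and balancing $r_d$, $\theta_d$ against the (arbitrarily fast) growth of $a_d$ so as to fit into the prescribed $\mathscr{C}^\infty$-neighborhood requires a careful, $d$-by-$d$ choice of parameters; one must also verify that the resulting $C_\epsilon$ is still a smooth embedded surface.
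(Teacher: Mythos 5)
Your approach is genuinely different from the paper's, and conceptually natural: place the perturbation in a sequence of shrinking disjoint disks $U_d$ accumulating at a point $p$, one disk per degree $d$, and inside $U_d$ interpolate $C$ through $a_d$ preassigned points of a chosen degree-$d$ curve $Z_d$. The paper instead works on a single fixed disk, takes the perturbation to be a convergent series of polynomials $\varphi=\sum_m Q_m$ tapered by a bump function $\rho$, and takes for $Z_{d_m}$ the homogenizations of $w_2-\epsilon\sum_{j\le m}Q_j(w_1)$; the intersections over the region where $\rho\equiv 1$ are then the zeros of the holomorphic tail $\sum_{j> m}Q_j$, and a stability estimate for the simple zeros of $Q_{m+1}$ gives the count. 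This choice is what makes the paper's transversality and counting effortless: on $D_{1/2}$ the perturbed surface is genuinely holomorphic, so every intersection with the holomorphic curve $\widetilde{Z}_{d_m}$ is automatically transversal and positively oriented, hence the count $a_{d_m}$ is exact and stable.

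That automatic positivity is precisely what your argument is missing, and it is a real gap, not a detail. The points $q_{d,i}$ lie on $Z_d\cap C_\epsilon$ by fiat, but nothing in your interpolation forces the intersections there to be transversal. Your fix, a ``final small algebraic deformation of $Z_d$ within the linear system of degree-$d$ curves through $\{q_{d,i}\}$,'' has two problems. First, since $a_d$ grows arbitrarily fast, for many $d$ one has $a_d>\binom{d+2}{2}-1$; the linear system of degree-$d$ curves through $\{q_{d,i}\}$ is then nonempty (it contains $Z_d$) but can be $0$--dimensional, leaving no room to deform. Second, and more fundamentally, $C_\epsilon$ is only smooth, not pseudoholomorphic for the standard $J$, so a tangential intersection between $C_\epsilon$ and a complex curve is not protected by positivity: a small real perturbation of $Z_d$ can split, merge, or simply \emph{cancel} such intersections (think of two tangent real arcs being pushed apart). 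Thus ``make each intersection transversal'' is not a count-preserving operation, and the inequality $\#(Z_d\cap C_\epsilon)\ge a_d$ is not yet established. The remedy is to build transversality (better: positivity) into the interpolation itself, for example by making $F_d$ locally holomorphic (say locally constant) near each $q_{d,i}$ while $Z_d$ has nonhorizontal tangent there; but this changes the interpolation problem, the estimates, and the structure of the argument, and is not contained in your sketch. You also correctly flag, but do not resolve, the $\mathscr{C}^\infty$ estimate $\|F_d\|_{\mathscr{C}^k}\lesssim\theta_d a_d^k r_d^{1-k}$ and the smoothness of $C_\epsilon$ at the accumulation point $p$; these are fillable by choosing $\theta_d$ diagonally so that $\theta_d\|g_d\|_{\mathscr{C}^k}\le 2^{-d}$ for all $k\le d$, but as written they remain open. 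So: genuinely different route, correct in spirit, but with a substantive gap in the transversality/stability step that the paper's holomorphic-tail construction is specifically designed to avoid.
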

We collect now some comments and consequences of Theorem \ref{thm:CP1}. 

\begin{remark}\label{rmk:symplectic} The standard complex structure of $\CP^2$ is tamed by the standard Fubini--Study symplectic form $\omega_{\mathrm{FS}}$ and in particular the algebraic curves are symplectic surfaces.  ``Being symplectic" is an open condition for the $\mathscr{C}^1$--topology and so a small $\mathscr{C}^\infty$--perturbation  of a symplectic surface remains a symplectic surface. Then, the $\mathscr{C}^{\infty}$--surface $C_\epsilon$ given by Theorem \ref{thm:CP1} can be chosen to be symplectic. In particular, Theorem \ref{thm:CP1} says that B\'ezout is  ``arbitrarily false'' for symplectic surfaces.
\end{remark}
\begin{remark}
The perturbation $C_{\epsilon}$ of $C$ is constructed using a small parameter $\epsilon$. Varying this parameter yields a smooth family of $\mathscr{C^\infty}$--surfaces that tend to $C$ in the $\mathscr{C}^\infty$--topology when $\epsilon\rightarrow 0$. As explained in Remark \ref{rmk:symplectic}, for $\epsilon$ small enough, $C_\epsilon$ is a symplectic surface and hence it is a $J_\epsilon$--holomorphic curve for an almost--complex structure $J_\epsilon$ tamed by $\omega_{\mathrm{FS}}$. We then have $J_\epsilon\rightarrow J_0$ in the $\mathscr{C}^\infty$--topology, where $J_0$ is the standard complex structure of $\CP^2$. Theorem \ref{thm:CP1} shows that, although for two $J$--holomorphic curves B\'ezout's Theorem is true,  this is ``arbitrarily false" for two pseudoholomorphic curves with respect to two arbitrarily close almost--complex structures. We call this phenomenon ``instability of B\'ezout theorem", from which the title of the article.
\end{remark}
\begin{remark}\label{rmk:Smooth is not needed} As explained earlier, our goal is to study B\'ezout's Theorem for symplectic surfaces in $\CP^2$. In order to construct symplectic surfaces that do not respect B\'ezout we start from smooth plane algebraic curves and apply a small smooth perturbation, as explained in Remark \ref{rmk:symplectic}. However, as will be seen in the proof, the construction of the perturbation $C_\epsilon$ of $C$ is local and it is done on small disks on $C$. The global smoothness of the algebraic curve $C$ is indeed not necessary and Theorem \ref{thm:CP1} is still valid if we start from a singular algebraic curve $C$: it is enough to apply the procedure described in Section \ref{Section Proof theorem 1} around a smooth point of $C$. 
\end{remark}
\begin{remark}\label{rmk:subanalytic}As it will become clear from the proof, for every $k\geq 0$ we can also require that the surfaces $C_\epsilon$ are of class $\mathscr{C}^k$ and \emph{subanalytic} (this is due to the existence of $\mathscr{C}^k$ and subanalytic bump functions, see Remark \ref{remark:sub}). In particular, no quantitative estimate can be expected even in the tame world.  For a ``real'' counterpart of this problem, see \cite{BLN, GKP}.
\end{remark}

Another way to look at B\'ezout's Theorem is from the point of view of positivity of intersection: two plane algebraic curves (or two $J$--holomorphic curves) always intersect positively and then, when the intersection is transversal, the number of intersection points coincides with the intersection product in homology, which is the product of the degrees of the curves. This point of view allows us to extend this problem on any algebraic surface $X$: for two algebraic curves in $X$ with transversal intersection, the number of intersection points coincides with the intersection product in homology. This positivity property remains valid for $J$--holomorphic curves in $X$, if we fix an almost--complex structure $J$ on $X$. In this sense, Theorem \ref{thm:CP1} is a special case of the following theorem.

\begin{thm}\label{thm:surfaces}Let $X$ be an algebraic surface, $H$ be a very ample divisor, $C\hookrightarrow X$ be an algebraic curve and $\{a_d\}_{d\in \mathbb{N}}$ be a sequence of positive numbers. For every $\epsilon>0$ there exists a $\mathscr{C}^\infty$--surface $C_\epsilon\hookrightarrow X$, $\epsilon$--close to $C$ in the $\mathscr{C}^\infty$--topology, and a sequence of algebraic curves $\{Z_d\}_{d\in \mathbb{N}}$, with $[Z_d]=d[H]$, such that for infinitely many $d\in \mathbb{N}$:
\be \#Z_d\cap C_\epsilon \geq a_d.\ee
Moreover the sequence $\{Z_d\}_d\in \mathbb{N}$ can be chosen such that the intersection $Z_d\cap C_\epsilon$ is transversal for every $d\in \mathbb{N}.$
\end{thm}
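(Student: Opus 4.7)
The plan is to transfer the local construction used in the proof of Theorem~\ref{thm:CP1} to the setting of a general polarized surface $(X, H)$, replacing the plane algebraic curves of degree $d$ by elements of the linear system $|dH|$. Since the perturbation $C_\epsilon$ of $C$ built for Theorem~\ref{thm:CP1} is purely local (see Remark~\ref{rmk:Smooth is not needed}), we first pick a smooth point $p\in C$ and analytic coordinates $(z,w)$ on a neighborhood $U$ of $p$ in $X$ identifying $U$ with a polydisk in $\C^2$ and $C\cap U$ with $\{w=0\}$, and then apply the construction of Section~\ref{Section Proof theorem 1} verbatim inside $U$. This produces a global $\mathscr{C}^\infty$--surface $C_\epsilon$ agreeing with $C$ outside a small sub-disk $D\Subset U$, $\epsilon$--close to $C$ in the $\mathscr{C}^\infty$--topology, together with a sequence of finite ``oscillation sets'' $S_d\subset C_\epsilon\cap D$ of cardinality at least $a_d$ for infinitely many $d$, each coming with a prescribed transverse ``model'' tangent direction.

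The remaining step will be to produce, for each such $d$, an algebraic curve $Z_d\in |dH|$ passing through $S_d$ transversally to $C_\epsilon$. Very ampleness of $H$ supplies the key input: by Riemann--Roch one has $h^0(X, dH)= \tfrac{1}{2}d^2 H^2 + O(d)$, so for $d$ large enough $|dH|$ has dimension much greater than $a_d$ and, moreover, it is $k$-very ample for any fixed $k$. In particular, the restriction map
\[
H^0(X, \mathcal{O}_X(dH)) \longrightarrow \bigoplus_{q\in S_d} J^1_q\bigl(\mathcal{O}_X(dH)\bigr)
\]
is surjective, so the sub-linear system of sections vanishing at each point of $S_d$ with tangent direction there transverse to $C_\epsilon$ is non-empty. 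We take $Z_d$ to be the zero locus of a generic such section; Bertini's theorem ensures that $Z_d$ is smooth and that $Z_d\cap C_\epsilon$ is transverse both at the points of $S_d$ and on the complement of $D$ (where $C_\epsilon=C$), and consequently $\#Z_d\cap C_\epsilon\geq |S_d|\geq a_d$.

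The main obstacle lies in the first step: verifying that the local perturbation construction from Section~\ref{Section Proof theorem 1} depends only on a germ of smooth holomorphic curve in $\C^2$ and not on any global feature of $\CP^2$ (such as the existence of plane curves of every degree passing through prescribed configurations). Once this local independence is granted, the globalization is essentially formal, since very ampleness of $H$ furnishes the required replacement for the ``abundance of degree-$d$ curves'' used in $\CP^2$. Any additional intersection points $Z_d\cap C$ occurring outside $D$ can only increase the count and therefore do not harm the lower bound $a_d$; should they spoil transversality, a final generic perturbation inside the sub-linear system described above restores it without losing the intersections at $S_d$.
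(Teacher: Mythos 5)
The approach you take is genuinely different from the paper's, but it contains a fatal gap in the step that produces the curves $Z_d$.

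Your plan is to redo the local perturbation of $C$ in analytic coordinates around a smooth point, extract finite ``oscillation sets'' $S_d$ on $C_\epsilon$ of cardinality $\geq a_d$, and then \emph{interpolate}: use very ampleness and Riemann--Roch ($h^0(X,dH)\sim\tfrac12 d^2 H^2$) to produce a curve in $|dH|$ passing through $S_d$ with prescribed transverse tangent directions. The problem is that the sequence $\{a_d\}$ is completely arbitrary — the whole content of the theorem is that it may grow \emph{faster than any polynomial in $d$}, e.g.\ $a_d=2^d$. For such sequences $h^0(X,dH)\sim d^2$ is hopelessly smaller than $a_d$, so the restriction map to jets along $S_d$ is very far from surjective, and the linear system of sections vanishing along $S_d$ (let alone with prescribed 1-jets) is empty. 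Your sentence ``for $d$ large enough $|dH|$ has dimension much greater than $a_d$'' is exactly backwards. The interpolation route can never work here: $a_d$ transversal intersections with a curve in $|dH|$ already force the ``extra'' intersections to come from the wiggling of $C_\epsilon$, not from freedom in the linear system.

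What the paper does instead is to \emph{push the problem down to $\CP^2$} via a branched covering. Choosing three generic sections of $L=\mathcal{O}_X(H)$ gives a finite map $u\colon X\to\CP^2$ of some degree $k$ with $u^*\mathcal{O}_{\CP^2}(1)=L$, under which $C$ maps $k$-to-$1$ onto a plane curve $C'$. One then applies Theorem~\ref{thm:CP1} (for singular curves, cf.\ Remark~\ref{smoothness}) to $C'$ in $\CP^2$, where the curves $\widetilde Z_{d_m}$ are \emph{not} obtained by interpolation but are the explicit truncations $P_{m,\epsilon}=w_2-\epsilon\sum_{j\le m}Q_j(w_1)$ of the series defining $C'_\epsilon$ — a construction that has nothing to do with the dimension of the space of degree-$d$ curves and genuinely produces super-polynomially many transversal intersections. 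Finally one pulls everything back: $C_\epsilon:=u^{-1}(C'_\epsilon)$ and $Z_{d_m}:=u^{-1}(Z'_{d_m})$ satisfy $[Z_{d_m}]=d_m[H]$ automatically, and the intersection count can only increase (by a factor $k$) under the covering. This sidesteps the globalization problem you were worried about entirely, because the only ambient surface on which explicit plane curves are ever needed is $\CP^2$ itself.

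If you want to salvage the local-to-global viewpoint, you should note that the issue is not really whether the \emph{perturbation} of $C$ localizes (it does), but that the curves $Z_d$ in Section~\ref{Section Proof theorem 1} are not generic interpolants of the oscillation set — they are graph-like curves engineered together with $C_\epsilon$ from the very same data $\{Q_j\}$, and that coupled construction is what makes super-polynomial intersection counts possible. Any argument for a general $(X,H)$ must either reproduce that coupled construction intrinsically on $X$, or, as the paper does, reduce to the $\CP^2$ case by a finite morphism.
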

It is worth noting that all previous Remarks \ref{rmk:symplectic}--\ref{rmk:subanalytic}  remain valid for any algebraic surface (not just for $\CP^2$) and can be applied verbatim to Theorem \ref{thm:surfaces}.
%\begin{remark}\comm{to be updated} The core of the proof of Theorem \ref{thm:CP1} is explained in sections \ref{sec:map} and \ref{construction of the hypersurfaces}. We first prove it for the case $C=\CP^1$, where the argument is reminiscent of a lacunary construction: essentially the perturbation $C_\epsilon$, locally, is seen as the graph of a holomorphic map obtained as a series of well chosen polynomials; the choice of these polynomials depends on the given sequence $\{a_d\}_{d\in \N}$ and the construction provides (infinitely many) algebraic curves $Z_d$ \emph{of degree $d$} for which \eqref{eq:in} holds. The main technical ingredient is Proposition \ref{propo:disk}, whose proof is given in Section \ref{sec:ppd}.  In order to pass from $\CP^1$ to a general algebraic curve in $\CP^2$ a further delicate approximation argument is needed. This is explained in Section \ref{Section Proof theorem 1 general}: there the main ingredient is Proposition \ref{propo:disk2}, which combines the ideas from Proposition \ref{propo:disk} with a gluing technique. The proof of Theorem \ref{thm:surfaces} uses a projective embedding and reduces to the previous cases (Proposition \ref{propo:disk2}) using the local structure of a projective manifold as a graph of a holomorphic map on its tangent space.
%\end{remark}

\subsection{Structure of the paper}In Section \ref{Section Proof theorem 1} we prove Theorem \ref{thm:CP1} in the case of $\CP^1\hookrightarrow \CP^2$.  The proof of the general case of Theorem \ref{thm:CP1} is given in Section \ref{Section Proof theorem 1 general}. Finally in Section \ref{sec:proj} we prove Theorem \ref{thm:surfaces}. 
\subsection*{Acknowledgements}The authors thank the anonymous referee for her/his helpful comments, which simplified the proof of the results.
%\comm{to be added, including the reference to the \cite{BLN, GKP}, where a ``real'' counterpart of this problem is studied.}

\section{Proof of Theorem \ref{thm:CP1}: the case $C=\CP^1$}\label{Section Proof theorem 1}
For the sake of exposition, we first give the proof of Theorem \ref{thm:CP1} in the special case $C=\CP^1,$ and then extend the construction to include the case of an algebraic curve $C\hookrightarrow \CP^2$. The perturbation $C_\epsilon$ will be the image of an embedding $\eta_\epsilon:\CP^1\to \CP^2$ which on $\CP^1\setminus D$ equals the inclusion (here $D$ will be a disk), and on $D$ it will be a small perturbation of the inclusion.

Let us start by fixing some notation. We denote by $[z_0, z_1, z_2]$ the homogeneous coordinates on $\CP^2$. In these coordinates we have $\CP^1=\{z_2=0\}$. We denote by $H:=\{z_0=0\}$ the hyperplane at infinity and by $\phi:\CP^2\setminus H\to \C^2$ the analytic affine chart
\be \phi:[z_0, z_1, z_2]\mapsto \left(\frac{z_1}{z_0}, \frac{z_2}{z_0}\right).\ee
We have $\phi(\CP^1\setminus H)=\phi(\CP^1\setminus [0,1,0])=\{(w_1, w_2)\in \C^2\,|\, w_2=0\}$ and $\phi([1, 0, 0])=(0,0).$

Let $B_r\subset\C$ be the disk of radius $r>0$ centered at the origin. Denote by $D_r:=\phi^{-1}(B_{r}\times \{0\})$ (this is a neighborhood of $[1,0,0]$ in $\CP^1$), so that $\phi$ induces a biholomorphism of triples:

\be ( \CP^2\setminus H, \CP^1\setminus [0,1,0],D_r)\simeq (\C^2, \C\times \{0\}, B_{r}).\ee
For $r=1$ we will just write $B$ for $B_1$ and $D$ for $D_1$.
The proof of Theorem \ref{thm:CP1} relies on the following key proposition.
%\begin{figure}[t]
%\includegraphics[width=0.8\textwidth]{figure1symplectic}
%\caption{}\label{fig:1}
%\end{figure}
\begin{prop}\label{propo:disk}Let $D:=\phi^{-1}(B\times \{0\})\subset \CP^1\subset \CP^2$. Let $j_{0}:D\to \CP^2$ be the inclusion. For every sequence $\{a_d\}_{d\geq 1}$ of natural numbers there exists a smooth embedding $j_\epsilon:D\to \CP^2$ with the property that for every $\epsilon>0$
\begin{enumerate}
\item the embedding $j_{\epsilon}|_{D\setminus D_{\frac{2}{3}}}$ equals $j_{0}|_{D\setminus D_{\frac{2}{3}}}$;
\item  the embedding $j_{ \epsilon}|_{D_\frac{1}{2}}$ is holomorphic;
\item as $\epsilon\to 0$ the embedding $j_{\epsilon}\to j_{0}$ in the $\mathscr{C}^\infty$--topology;
\end{enumerate}
Moreover there exists a subsequence $\{a_{d_m}\}_{m\geq 1}$ and, for every $\epsilon>0$, a sequence of plane algebraic curves $\{\widetilde{Z}_{d_m}\}_{m\geq1}$ with $\deg(\widetilde{Z}_{d_m})=d_m$, such that:
\begin{enumerate}
\item [(4)] for every $m\geq 1$ the intersection $j_{\epsilon}(D)\cap \widetilde{Z}_{d_m}$ is transversal;
\item [(5)] for every $m\geq 1$ the intersection $j_{\epsilon}(D_{\frac{1}{2}})\cap \widetilde{Z}_{d_m}$ consists of $a_{d_m}$ points, all of which are positively oriented.
\end{enumerate}
\end{prop}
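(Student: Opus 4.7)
The plan is to construct $j_\epsilon$ as a graph perturbation $j_\epsilon(t)=(t,\,\epsilon\chi(|t|)g(t))$ in the affine chart $\phi$, where $\chi\colon[0,1]\to[0,1]$ is a smooth cut-off with $\chi\equiv 1$ on $[0,1/2]$ and $\chi\equiv 0$ on $[2/3,1]$, and $g$ is a fixed holomorphic function on a neighborhood of $\overline{B}$; for concreteness take $g$ entire, e.g.\ $g(t)=e^{t/2}$. Conditions~(1) and~(2) are then built in: $j_\epsilon=j_0$ on $D\setminus D_{2/3}$, and $j_\epsilon(t)=(t,\epsilon g(t))$ is the holomorphic graph on $D_{1/2}$. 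Condition~(3) follows from $\|j_\epsilon-j_0\|_{\mathscr{C}^k}=O(\epsilon)$ for every $k$, so $j_\epsilon$ is a $\mathscr{C}^\infty$--small perturbation of the embedding $j_0$ and in particular an embedding for $\epsilon$ small.

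For the curves I would take $\widetilde Z_{d_m}$ to be the projective closure of the affine graph $\{w_2=P_m(w_1)\}$ for a polynomial $P_m$ of degree $d_m$, so that intersections with $j_\epsilon(D_{1/2})$ correspond bijectively to the zeros in $B_{1/2}$ of the entire function $h_m(t):=\epsilon g(t)-P_m(t)$. Writing $n_m:=a_{d_m}$, pick distinct points $\zeta_1,\dots,\zeta_{n_m}\in B_{1/3}$ and set
\[
  P_m(t):=\epsilon T_{d_m}g(t)-c_m\prod_{i=1}^{n_m}(t-\zeta_i),
\]
where $T_{d_m}g$ is the Taylor polynomial of $g$ at $0$ of degree $d_m$ and $c_m>0$ is a parameter; provided $n_m\le d_m$ the polynomial $P_m$ has degree exactly $d_m$. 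Then $h_m(t)=\epsilon\bigl(g(t)-T_{d_m}g(t)\bigr)+c_m\prod_i(t-\zeta_i)$. On the circle $|t|=1/2$ one has $|\prod_i(t-\zeta_i)|\ge(1/6)^{n_m}$, while for $g=e^{t/2}$ the Taylor tail is super-exponentially small, bounded by $\epsilon\cdot(1/4)^{d_m+1}/(d_m+1)!$ up to a bounded factor. Choosing a subsequence $\{d_m\}$ that grows fast enough relative to $\{a_{d_m}\}$ and selecting $c_m$ so the product dominates the tail on $|t|=1/2$, Rouch\'e's theorem yields exactly $n_m$ zeros of $h_m$ in $B_{1/2}$---the same as those of the product alone---and a small generic perturbation of $c_m$ and the $\zeta_i$ makes them simple, giving~(5).

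Property~(4) then follows almost for free: in $D_{1/2}$ the simple zeros already give transverse intersections, while on the annulus $D\setminus D_{1/2}$, where $j_\epsilon$ is only smooth, transversality is an open dense condition on the parameters and is arranged by a further small generic perturbation that preserves the Rouch\'e inequality. The positivity required in~(5) is automatic since both $j_\epsilon(D_{1/2})$ and $\widetilde Z_{d_m}$ are holomorphic in the affine chart: the intersection Jacobian is $|\det|^2>0$. Smoothness of $\widetilde Z_{d_m}$ as a plane algebraic curve in $\CP^2$ is likewise generic, secured by a final perturbation of $P_m$ (e.g.\ of its leading coefficient, to avoid singularities at infinity).

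The main obstacle is the Rouch\'e comparison together with the associated choice of subsequence: the graph ansatz forces $n_m\le d_m$, and the inequality $c_m(1/6)^{n_m}>|\text{tail}|$ on $|t|=1/2$ boils down to $n_m\log 6\lesssim\log(d_m+1)!+d_m\log 4$, so to roughly $n_m\lesssim d_m\log d_m$. For a sequence $\{a_d\}$ growing faster than what graph curves accommodate, one replaces $\{w_2=P_m\}$ by the zero locus of a bivariate polynomial $F_m(w_1,w_2)$ of degree $d_m$, prescribing up to $\binom{d_m+2}{2}-1$ vanishing conditions $F_m(\zeta_i,\epsilon g(\zeta_i))=0$ at points $\zeta_i\in B_{1/3}$; the same Rouch\'e domination argument then applies to the holomorphic function $F_m(t,\epsilon g(t))$, and after passing to a sufficiently sparse subsequence $\{d_m\}$ adapted to $\{a_d\}$ this yields the proposition.
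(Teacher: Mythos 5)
There is a genuine gap, and it lies in the very first choice: you fix a holomorphic function $g$ in advance (e.g.\ $g(t)=e^{t/2}$) independently of the sequence $\{a_d\}$, and then try to find degree--$d_m$ curves with $a_{d_m}$ transversal intersections with the graph of $\epsilon g$ in $B_{1/2}$. This cannot work for a fast--growing sequence. The function $e^{t/2}$ satisfies $g'=\tfrac12 g$, so its graph is a Pfaffian curve, and by Khovanskii's theorem the number of isolated zeros of $F(t,\epsilon g(t))$ in a fixed compact disc is bounded \emph{polynomially} in $\deg F$, uniformly over all polynomials $F$ of degree $d$. Thus if $a_d$ is, say, $2^d$, no polynomial $F$ of degree $d$ (graph form $w_2=P(w_1)$ \emph{or} general bivariate $F(w_1,w_2)$) can produce $a_d$ intersection points on the graph of $\epsilon e^{t/2}$, and passing to a sparser subsequence $\{d_m\}$ does not help since $a_{d_m}$ still outstrips any polynomial in $d_m$. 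You correctly sense an obstruction (your $n_m\lesssim d_m\log d_m$ estimate) but misread it as a degree--counting artifact of the graph ansatz that a bivariate $F_m$ would cure; it is in fact an intrinsic intersection bound attached to the tame function $g$, and the bivariate version $F_m(t,\epsilon g(t))$ is still a Pfaffian function of the same order, hence still obeys a polynomial zero bound. The positivity and transversality observations and the Rouch\'e mechanism you describe are fine as far as they go, but they are built on a foundation that cannot reach the full statement.

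The paper evades this by making the perturbing function $\varphi$ itself \emph{depend} on $\{a_d\}$: one constructs $\varphi=\sum_{m\ge1}Q_m$ as a convergent series of polynomials $Q_m(w)=c_m(w-\lambda)^{2d_{m-1}}q_m(w)$, where $q_m$ has $a_{d_{m-1}}$ simple zeros in $B_{1/2}$ and $\lambda\notin B$, and then takes $\widetilde Z_{d_m}$ to be the projective closure of the graph of the \emph{partial sum} $\epsilon\sum_{j\le m}Q_j$. The intersections with the graph of $\epsilon\varphi$ are exactly the zeros in $B$ of $Q_{m+1}+\sum_{j\ge m+2}Q_j$; the leading term $Q_{m+1}$ carries $a_{d_m}$ simple zeros in $B_{1/2}$, and the coefficients $c_m$ are chosen (via a stability constant $\delta_m$ for nondegenerate zeros and a geometric series bound) so that the tail is $\mathscr C^1$--small enough not to change this count. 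This is the structural idea your proposal is missing: the curves $\widetilde Z_{d_m}$ are not found a posteriori for a fixed $g$, they are literally the truncations of the defining series for $\varphi$.
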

We will prove Proposition \ref{propo:disk} in the Section \ref{sec:ppd}, we now show how Theorem \ref{thm:CP1}, in the case $C=\CP^1$, follows from it.
\begin{proof}[Proof of Theorem \ref{thm:CP1}: case $C=\CP^1$]Using the notation of Proposition \ref{propo:disk}, define $C_\epsilon$ to be 
\be C_\epsilon:=(\CP^1\setminus D_{\frac{2}{3}})\cup j_{\epsilon}(D),\ee
and consider the sequence $\{\widetilde{Z}_{d_m}\}_{m\geq 1}$ of algebraic curves given by the same proposition

For every $\epsilon>0$ the set $C_\epsilon$ is a well defined smooth surface in $\CP^2$ by point (1) of Proposition \ref{propo:disk} and by point (5) we can make it arbitrarily close to $\CP^1$ in the $\mathscr{C}^\infty$--topology. Point (3) and (4) of the proposition give now for every $m\geq 1$:
\be  \#\widetilde{Z}_{d_m}\cap C_\epsilon \geq a_{d_m},\ee
with transversal intersection in $j_{\epsilon}(D)$. The curves $\{Z_{d_m}\}_{m\geq 1}$ are defined now by taking a small perturbation of the polynomials defining the $\{\widetilde{Z}_{d_m}\}_{m\geq 1}$, making them smooth and with the intersection which is transversal on the whole $C_\epsilon$. This concludes the proof. \end{proof}

%\begin{remark}The dependence  on $r>0$ of the ingredients from Proposition \ref{propo:disk} did not play a role in the proof of Theorem \ref{thm:CP1} in the case $C=\CP^1$, but it will play a role in the general case.
%\end{remark}
\begin{cor}With the above notations, if $\epsilon>0$ is small enough then $C_{\epsilon}\hookrightarrow \CP^2$ is symplectic.
\end{cor}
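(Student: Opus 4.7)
The plan is to use the fact that the unperturbed $\CP^1\hookrightarrow \CP^2$ is already a symplectic submanifold, combined with the general principle that ``being symplectic'' is an open condition on embeddings in the $\mathscr{C}^1$--topology. The first point is immediate: $\CP^1$ is a complex submanifold of the K\"ahler manifold $(\CP^2,\omega_{\mathrm{FS}},J_0)$, so $\omega_{\mathrm{FS}}$ restricts to a nowhere--vanishing $2$--form on $\CP^1$ (it coincides with the Fubini--Study area form on $\CP^1$).

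For the openness I would formulate things at the level of embeddings. For a compact surface $S$, a smooth embedding $f:S\to \CP^2$ has symplectic image precisely when the $2$--form $f^*\omega_{\mathrm{FS}}$ is pointwise non--zero on $S$. The assignment $f\mapsto f^*\omega_{\mathrm{FS}}$ is continuous from $\mathscr{C}^1(S,\CP^2)$ to $\mathscr{C}^0(S,\Lambda^2 T^*S)$, since its value at a point depends only on the $1$--jet of $f$ at that point; compactness of $S$ then makes pointwise non--vanishing stable under $\mathscr{C}^1$--small perturbations.

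To apply this, I would assemble the family of embeddings $f_\epsilon:\CP^1\to \CP^2$ defined as the inclusion on $\CP^1\setminus D_{2/3}$ and as $j_\epsilon$ on $D$. Proposition~\ref{propo:disk}(1) ensures the two definitions match on the overlap and yield a globally smooth embedding, Proposition~\ref{propo:disk}(3) gives $f_\epsilon\to f_0$ in the $\mathscr{C}^\infty$--topology (in particular in $\mathscr{C}^1$), and by the first paragraph $f_0$ is a symplectic embedding. The openness above then guarantees that $f_\epsilon$ remains a symplectic embedding for all sufficiently small $\epsilon>0$, so $C_\epsilon = f_\epsilon(\CP^1)$ is a symplectic surface, which is the claim.

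I do not foresee any substantive obstacle: the corollary is exactly the content already anticipated in Remark~\ref{rmk:symplectic}, and the only verification beyond the general openness principle is that the piecewise definition of $f_\epsilon$ inherits the $\mathscr{C}^\infty$--convergence from Proposition~\ref{propo:disk}(3), which is immediate because $f_\epsilon$ coincides with $f_0$ outside $D_{2/3}$.
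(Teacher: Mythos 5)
Your argument is correct and is exactly the one the paper uses, merely written out in full: the paper's proof is the one-line observation that $\CP^1\hookrightarrow\CP^2$ is symplectic and that being symplectic is $\mathscr{C}^1$--open. Your elaboration (nonvanishing of $f^*\omega_{\mathrm{FS}}$, continuity of $f\mapsto f^*\omega_{\mathrm{FS}}$ in the $\mathscr{C}^1$--topology, compactness of $\CP^1$, and the fact that $f_\epsilon$ agrees with $f_0$ off $D_{2/3}$ so the $\mathscr{C}^\infty$--convergence of $j_\epsilon$ globalizes) is just the careful unpacking of that sentence.
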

\begin{proof}This is immediate, since  $\CP^1\hookrightarrow \CP^2$ is symplectic and ``being symplectic" is an open property in the $\mathscr{C}^1$--topology.
\end{proof}
\subsection{Proof of Proposition \ref{propo:disk}}\label{sec:ppd}
\subsubsection{Definition of the embedding $j_{\epsilon}$.}\label{sec:prtb1}
The definition of the embedding $j_{\epsilon}$ depends on the choice of a smooth map $\varphi:B\to \C$, which we explicitly construct in Section \ref{sec:map}. Once the map $\varphi$ has been fixed, the embedding $j_{\epsilon}$ will be defined as follows. Let $\rho:\C\to [0,1]$ be a smooth bump function such that
\be\label{eq:bump1} \rho|_{\C\setminus B_{\frac{2}{3}}}\equiv 0, \quad 0<\rho|_{B_{\frac{2}{3}}\setminus B_{\frac{1}{2}}}<1\quad \textrm{and}\quad \rho|_{B_\frac{1}{2}}\equiv 1.\ee
\begin{remark}\label{remark:sub}If in this step we take $\rho$ to be a $\mathscr{C}^k$ and subanalytic bumb function\footnote{Such functions exist. Of course they are not $\mathscr{C}^{\infty}$.}, this will result in a $\mathscr{C}^k$ and subanalytic perturbation of $C$.
\end{remark}
Given $\epsilon>0$ let 
\be\label{eq:fiepsilon}\varphi_{\epsilon}:=\epsilon\rho\cdot \varphi :\C\to \C\ee
and define $j_{\epsilon}:D\to \CP^2$ by:
%\begin{figure}[t]
%\includegraphics[scale=0.9]{figure2symplectic}
%\caption{A picture of the embedding $j_{r, \epsilon}:D\to \CP^2$ (the blue part is $j_{r, \epsilon}(D_{\frac{2}{3}r})$ and the horizontal line is $j_{r,0}(D_r)$). This embedding ``perturbs'' $j_{r, 0}(D_r)$ and creates multiple intersections with $\widetilde{Z}_{d_m}$.  }\label{fig:2}
%\end{figure}
\be j_{\epsilon}(x):=\phi^{-1}\left(\phi(j_{0}(x)), \varphi_{\epsilon}(\phi(j_{0}(x)))\right).\ee
Notice already that a map $j_{\epsilon}$ defined in this way, with $\varphi$ smooth, verifies the  properties (1) and (5) from Proposition \ref{propo:disk}. 

\subsubsection{The construction of the map $\varphi$} \label{sec:map}

%We need now to (a) build the perturbation $C_\epsilon$ and  (b) find a subsequence $\{a_{d_m}\}_{m\geq 1}$ and a sequence of polynomials $\{p_m\}_{m\geq 0}$  of degree $d_m=\deg(p_m)$ such that $\#Z(p_m)\cap C_\epsilon\geq a_{d_m}$, with transversal intersection.  
%The perturbation $C_\epsilon:=C_{r, \epsilon}$ will be obtained as in the previous section, fixing an $r>0$ and using a specific map $\varphi_r:B_r\to \C,$  which we now construct. 

The construction depends on the given sequence $\{a_d\}_{d\geq 1}.$ In the process we will also construct the sequence $\{d_m\}_{m\geq 1}$ of the degrees and consequently the subsequence $\{a_{d_m}\}_{m\geq 1}.$ We will search for a map $\varphi$ given by a series of polynomials, $\varphi=\sum_{m\geq 1}Q_m$, converging on $B$. While constructing the polynomials $\{Q_m\}_{m\geq 1}$ we will need auxiliary polynomials $\{q_m\}_{m\geq 1}$.
%These families of polynomials also depend on $r>0$, but we omit this dependence in the notation.)

Let $d_1=1$ and $q_1(w)=Q_1(w)$ be the zero polynomial. 

%Pick a polynomial $q_2=q_2(w)$ with precisely $a_{d_1}$ zeroes on $B$, all of which are simple. Define
%\be Q_2(w):=c_2(w-\lambda)^2q_2(w),\ee
%where $\lambda\notin 2B$ and  $c_2>0$ is some positive constant that we will fix later. Define $d_2:=\mathrm{deg}(Q_2).$ Since $Q_2$ has only simple zeroes on $B$, there exists $\delta_2>0$ such that for every $\mathscr{C}^1$ function $h:B\to \C$ with $\|h\|_{\mathscr{C}^1(B, \C)}\leq \delta_2$ we have
%\be \#Z(Q_2)\cap B=\#Z(Q_2+h)\cap B,\ee
%where all zeroes are still nondegenerate.

Proceeding iteratively, for $m\geq 2$ let $q_{m}(w)$ be a polynomial in one complex variable with precisely $a_{d_{m-1}}$ zeroes on $B_{\frac{1}{2}}$, all of which are simple.  Define:
\be Q_{m}(w):=c_{m}(w-\lambda)^{2d_{m-1}}q_{m}(w),\ee
where $\lambda\notin B$ and  $c_{m}>0$ is some positive constant that we will fix later\footnote{The factor $(w-\lambda)^{2d_{m-1}}$ has two purposes: the power $2d_{m-1}$ makes sure that there are no cancellations in \eqref{eq:polymeps} and that $\deg(P_{m, \epsilon})=d_m)$ and   $\lambda$ is chosen to be outside $B$ to make sure that we do not introduce extra zeroes in $B$.}.
Define $d_{m}:=\deg(Q_{m}).$ Since $Q_{m}$ has only nondegenerate zeroes on $B_\frac{1}{2}$, there exists $\delta_{m}>0$ such that for every function $h\in \mathscr{C}^1(B,\C)$ with $\|h\|_{\mathscr{C}^1(B, \C)}\leq \delta_{m}$ we have
\be\label{eq:delta} \#Z(Q_{m})\cap B=\#Z(Q_{m}+h)\cap B,\ee
where all zeroes are still nondegenerate.

We choose now the sequence $\{c_m\}_{m\geq 1}$ such that for every $m\geq 1$
\be\label{eq:deltaseries} \|Q_{m}\|_{\mathscr{C}^1(B, \C)}< \frac{1}{2^{m}}\min\{\delta_1, \ldots, \delta_{m-1}\},\ee
and such that  the series $\sum_{m\geq 1}Q_m$ of holomorphic functions converges on $B$. We finally define:
\be \varphi(w):=\sum_{m\geq 1}Q_m(w).\ee
Notice that we have also obtained the sequences $\{d_m\}_{m\geq 1}$ and $\{a_{d_m}\}_{m\geq 1}.$
\subsubsection{The construction of the curves $\widetilde{Z}_{d_m}$.}\label{construction of the hypersurfaces} The construction from this section is inspired by \cite{GKP, BLN}.
%At this point, for every $r,\epsilon>0$, we have built the perturbation $C_{r,\epsilon}$, defined as in \eqref{eq:Ce} using the map $\varphi_r$ constructed above.

 We still need to construct the sequence of curves $\{\widetilde{Z}_{d_m}\}_{m\geq 1}$. Each curve $\widetilde{Z}_{d_m}$ will be the zero set of a homogeneous polynomial $\widetilde{p}_{m}\in \C[z_0, z_1, z_2]_{(d_m)}$. We will first construct some auxiliary families of polynomials, letting the definition of $\widetilde{p}_m$ be the final step of this process.
%(Once again, these families of polynomials depend on $r>0$, but we omit the dependence in the notation.)

For $m\geq 1$ and $\epsilon>0$ define the polynomial
\be\label{eq:polymeps} P_{m, \epsilon}(w_1, w_2):=w_2-\epsilon\sum_{j=1}^mQ_{j}(w_1).\ee
Notice that $\deg(P_{m, \epsilon})=d_m$. Moreover, on $B\times \C$ the system of equations
\be\label{eq:system1}\{w_2-\epsilon\varphi(w_1)=0, \, P_{m, \epsilon}(w_1, w_2)=0\}\ee
is equivalent to
\be \label{eq:system2}\left\{w_2-\epsilon \varphi(w_1)=0,\,Q_{m+1}(w_1)+\sum_{j\geq m+2}Q_{j}(w_1)=0\right\}.\ee
In fact, subtracting the second equation in \eqref{eq:system1} from the first equation in \eqref{eq:system1}, we get the second equation in \eqref{eq:system2}.
The solutions of \eqref{eq:system2} with $(w_1, w_2)\in B\times \C$ are all on the graph of 
\be\label{eq:fiepsilon2} \varphi_{\epsilon}:=\epsilon \varphi\ee (moreover, they are all positively oriented, since  $\varphi_{\epsilon}|_{B}$ is holomorphic). 
In other words, every solution to this system is of the form $(u, \varphi_{\epsilon}(u))$ with $u\in Z(Q_{m+1}+\sum_{j\geq m+2}Q_{j}).$ By \eqref{eq:deltaseries} it follows that
\be \left\|\sum_{j\geq m+2}Q_{j}\right\|_{{\mathscr{C}^1(B, \C)}}\leq \sum_{j\geq m+2}\frac{1}{2^j}\min\{\delta_1, \ldots, \delta_{j-1}\}<\delta_{m+1}.\ee In particular, from \eqref{eq:delta} the number of solutions of $\eqref{eq:system1}$ with $(w_1, w_2)\in B\times \C$ is the same as $\#Z(Q_{m+1})\cap B$, which is precisely $a_{d_{m}}$. Moreover, all these solutions are nondegenerate.

Let now $\widetilde{p}_m(z_0, z_1, z_2)$ be the homogenization of $P_{m, \epsilon}(z_1, z_2)$ and set $\widetilde{Z}_{d_m}:=Z(\widetilde{p}_m)$. Consider $Z(\widetilde{p}_m)\cap j_{\epsilon}(D)$. Notice that $\deg(\widetilde{p}_m)=d_m$. Let now 

%\begin{figure}[t]
%\includegraphics[scale=0.9]{figure3}
%\caption{A visual picture of $\Gamma_{r,\epsilon}$.}\label{fig:3}
%\end{figure}

\be \Gamma_{\epsilon}:=\mathrm{graph}(\varphi_{\epsilon}|_{B_{\frac{1}{2}}})\subset\phi^{-1}( j_{\epsilon}(D))\ee
and observe that  $\phi$ maps the set  $Z(\widetilde{p}_m)\cap \phi^{-1}(\Gamma_{\epsilon})$ to the set of solutions of \eqref{eq:system1}. In particular, since $\phi$ is a biholomorphism, by construction,
\be \label{eq:count}\#\left(Z(\widetilde{p}_m)\cap \phi^{-1}(\Gamma_{\epsilon})\right)=a_{d_m},\ee
with the intersection $Z(\widetilde{p}_m)\cap \phi^{-1}(\Gamma_{\epsilon})$ which is transversal.

\subsubsection{End of the proof of Proposition \ref{propo:disk}}
%The embedding $j_{r, \epsilon}$ is simply the restriction to $D_r$ of the embedding $\eta_{r, \epsilon}$:
%\be j_{r, \epsilon}(x):=\phi^{-1}\left(\phi(\eta_0(x)), \varphi_{r,\epsilon}(\phi(\eta_0(x)))\right).\ee
Property (2) follows from \eqref{eq:bump1} and the fact that $\rho \varphi_{\epsilon}|_{D_{\frac{1}{2}}}$ is holomorphic. Properties (4) and (5) follows from \eqref{eq:count} and the definition of the polynomials $\widetilde{p}_m$, whose zero sets are the $\widetilde{Z}_{d_m}.$ Property (3) follows from the fact that $ \rho \varphi_{\epsilon}\to 0$ in the $\mathscr{C}^1$--topology.
This concludes the proof.\qed

\section{Proof of Theorem \ref{thm:CP1}: the general case}\label{Section Proof theorem 1 general}
We continue now with the proof of Theorem \ref{thm:CP1} in the case of a general plane algebraic curve $C\hookrightarrow\CP^2$. The following proof was suggested to the authors by the anonymous referee.

Given the sequence $\{a_d\}_{d\in \mathbb{N}}$ and $\epsilon>0$, it will be enough to build a surface $C_\epsilon$, $\epsilon$--close to $C$ in the $\mathscr{C}^{\infty}$--topology, and infinitely many algebraic curves $\{Z_m\}_{m\in \mathbb{N}}$ of degree $\ell(m)$ such that $\#Z_m\cap C_\epsilon\geq a_{\ell(m)}$ (with transversal intersection). 

 Let $k\in \mathbb{N}$ be the degree of $C$. Choose affine coordinates $(w_1,w_2)$  on $\C^2\simeq U\subset \CP^2$ in such a way that the projection of $C$ to the $w_1$-axis is unramified over the disc $D =\{|w_1| \leq 1\}$ and of degree $k$. 
 
Given now the sequence $\{a_d\}_{d\in \mathbb{N}}$ we define the new sequence:
\be b_{d}:=a_{kd}\ee
 and we apply the construction of the previous section to this new sequence.
Let therefore $\varphi_\epsilon:D\to \C$ be the map defined in \eqref{eq:fiepsilon}, where $\varphi$ is constructed in Section \ref{sec:map}. Consider the smooth surface
$C_\epsilon\subset \CP^2$ obtained by perturbing $C$ over $D$, defined in affine coordinates by
\be C_{\epsilon}\cap U=\{R(w_1, w_2-\varphi_\epsilon(w_1))=0\}\ee
(recall that $\varphi_\epsilon$ is zero outside $D$).

Let $\widetilde{R}_{m, \epsilon}$ be the sequence of polynomials given by:
\be \widetilde{R}_{m, \epsilon}(w_1, w_2)=R(w_1, P_{m,\epsilon}(w_1, w_2)),\ee
where $P_{m, \epsilon}$ is defined in \eqref{eq:polymeps}.

By construction, $\widetilde{R}_{m,\epsilon}$ is a polynomial of degree $kd_{m}$ and the curves $\widetilde{Z}_{m}=Z(\widetilde{R}_{m, \epsilon})\subset \C^2$ intersect $C_\epsilon\cap U$ in at least $k b_{d_m}\geq a_{kd_m}$ points. In particular this is true also for the projective curve $Z_m=Z(R_{m, \epsilon})$, of degree $kd_m$, where $R_{m, \epsilon}$ is obtained by homogenization. The condition on the transversal intersection is obtained by a slight perturbation of $Z_m$.

This concludes the proof.
\begin{remark}\label{smoothness}
It is worth noting that the same proof works without requiring the curve $C$ to be smooth. The perturbation can be done in a small neighborhood $U$ of any smooth point $p\in C$ and the surface $C_\epsilon$ obtained is a $\mathscr{C}^\infty$ modification of $C$ around $p$ and coincides with $C$ outside $U$.
\end{remark}

\section{Proof of Theorem \ref{thm:surfaces}}\label{sec:proj}

%Given the sequence $\{a_d\}_{d\in \mathbb{N}}$ and $\epsilon>0$, it will be enough to build a surface $C_\epsilon\hookrightarrow X$, $\epsilon$--close to $C$ in the $\mathscr{C}^{\infty}$--topology, and infinitely many algebraic curves $\{Z_m\}_{m\in \mathbb{N}}$ such that $[Z_m]=\ell(m)[H]$ and such that $\#Z_m\cap C_\epsilon\geq a_{\ell(m)}$ (with transversal intersection).

Let $L$ be the very ample line bundle on $X$ associated with the very ample divisor $H$. For the generic choice of three holomorphic sections $s_0, s_1, s_2$ of $L$ the map $u:X\to \CP^2$ defined by
\be u(x):=[s_0(x), s_1(x), s_2(x)]\ee
satisfies:
\begin{enumerate}
\item $u$ is a branched covering of some degree $k\geq 1$;
\item $u|_C$ is a branched covering of the same degree $k$ on some projective curve $C':=u(C)\subset \CP^2$.
\end{enumerate}
Moreover, by construction, $u^*\mathcal{O}_{\CP^2}(1)=L$.

Let $p\in C'$ be a smooth point such that $u^{-1}(p)$ consists of $k$ points in $C$, and let  $D$ be a disk in $\CP^2$ around $p$ such that $u^{-1}(D)$ consists of $k$ disjoint disks in $X$. 

%Given the sequence $\{a_d\}_{d\in \mathbb{N}}$, define the new sequence 
%\be b_d:=a_{kd}.\ee
 Apply Theorem \ref{thm:CP1} (see also Remark \ref{smoothness}), to the sequence $\{a_d\}$ to get a surface $C_\epsilon'\subset \CP^2$ which is a small $\mathscr{C}^\infty$ modification of $C'$ near $p$ (in particular $C'_{\epsilon}$ is smooth near $p$, and it coincides with $C'$ outside a disk) and a sequence of  smooth curves $Z'_{d_m}\subset \CP^2$ of degree $d_m$ such that $\#C_\epsilon'\cap Z'_{d_m}\geq a_{d_m}.$
 
Finally, we set $C_\epsilon:=u^{-1}(C_\epsilon')$ (this is a smooth surface in $X$ which is $\epsilon$-close to $C$),  and $Z_{d_m}:=u^{-1}(Z'_{d_m})$, which are smooth algebraic curves. By construction we have $[Z_{d_m}]=d_m[H]$ and $\#C_\epsilon\cap Z_{d_m}\geq k a_{d_m}$.
This concludes the proof.

	\bibliographystyle{alpha}
	\bibliography{bez}
\end{document}